\newtheorem{thm}{Theorem}[section]
\newtheorem{thmx}{Theorem}[section]
\newtheorem{lem}[thm]{Lemma}
\newtheorem{prop}[thm]{Proposition}
\newenvironment{proof}{\noindent\emph{Proof.}}{\hfill$\square$\medskip}
\newcommand{\I}{\mathcal{I}}
\newcommand{\D}{\Delta}
\newcommand{\R}{\mathbb{R}}
\newcommand{\ve}{\varepsilon}
\title{Non-radial solutions to a bi-harmonic equation with negative exponent}
\author{Ali Hyder\thanks{The author is supported by the Swiss National Science Foundation, Grant No. P2BSP2-172064}\\ {\small Department of Mathematics, University of British Columbia, Vancouver BC V6T1Z2, Canada}\\ {\small \texttt{ali.hyder@math.ubc.ca} }\and Juncheng Wei\thanks{ The research is partially supported by NSERC} \\  {\small  Department of Mathematics, University of British Columbia, Vancouver BC V6T1Z2, Canada}\\  {\small \texttt{jcwei@math.ubc.ca} } }
\begin{document}

\date{}
\maketitle

\abstract  We prove the existence of non-radial  entire solution to $$\Delta^2 u+u^{-q}=0\quad\text{in }\mathbb{R}^3,\quad u>0,$$ for $q>1$. This answers an open question raised by P. J. McKenna and W. Reichel (E. J. D. E.  \textbf{37} (2003) 1-13).

\section{Introduction}
We consider the following bi-harmonic equation with negative exponent
\begin{align}\label{eq-4th}
\D^2 u+u^{-q}=0\quad\text{in }\R^3,\quad u>0,
\end{align}
where $q>0$.

For $q=7$, problem \eqref{eq-4th} can be seen as a fourth  order analog of the Yamabe equation (see \cite{Branson, Choi-Xu, Yang-Zhu}), namely \begin{align}\label{eq-nth}\D^2 u=\frac{n-4}{2}u^\frac{n+4}{n-4}\quad\text{in }\R^n,\quad u>0.\end{align}  


In the recent past,  radial solutions to equation \eqref{eq-4th} have been studied by many authors, especially the existence and asymptotic behavior: 

\begin{thmx}(\cite{Choi-Xu, Ngo2,  Guerra, Lai, MR, Xu})\label{thmA}
\begin{itemize}
\item[i)] There is no entire solution to \eqref{eq-4th} for $0< q\leq1$.
\item[ii)] If $u$ has exact linear growth at infinity, that is,
$$ \lim_{|x| \to +\infty} \frac{ u(x)}{|x|} = C>0,$$
then $q>3$. Moreover, for $q=7$, $u$ is given by $u(x)=\sqrt{\sqrt{1/15}+|x|^2}$, and is unique up to dilation and translations.
\item[iii)] For $q>3$ there exists radial solution with exact linear growth.
\item[iv)] For $q>1$ there exists radial solution with exact quadratic growth, that is, $$ \lim_{|x| \to +\infty} \frac{ u(x)}{|x|^2} = C>0.$$
\item[v)] For $1<q<3$ there exists a radial solution $u$ such that $r^{-\frac{4}{q+1}}u(r)\to C(q)>0$ as $r\to\infty$ (the constant $C(q)$ is explicitly known).
\item[vi)] For $q=3$ there exists a radial solution $u$ such that $r^{-1}(\log r)^{-\frac14}u(r)\to2^\frac14$ as $r\to\infty$.
\end{itemize}
\end{thmx}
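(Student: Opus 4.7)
The plan is to reduce each item to the radial ODE
\begin{equation*}
u'''' + \frac{4}{r} u''' + u^{-q} = 0, \quad r > 0,
\end{equation*}
obtained from $\D^2 u = u'''' + \tfrac{4}{r} u'''$ in dimension three, together with the smoothness conditions $u'(0) = u'''(0) = 0$; every radial solution is then parametrized by the two initial data $(a,b) = (u(0), u''(0))$.

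For item (i), I would argue by contradiction: integrating the equation once and using $u^{-q}>0$ gives $\D u \geq 0$, and a second integration yields the lower bound $u(r) \gtrsim r^{4/(q+1)}$ at infinity (the minimal algebraic growth consistent with the source). Combined with the radial integral identity
\begin{equation*}
R^2 u'''(R) + 2R u''(R) = -\int_0^R u(s)^{-q} s^2\, ds,
\end{equation*}
this forces $\int_0^\infty s^2 u^{-q}\, ds < \infty$; substituting the lower bound gives a contradiction for $q \leq 1$. For (ii), the same identity combined with $u(r) \sim Cr$ reduces finiteness of the integral to $\int^\infty s^{2-q}\, ds < \infty$, hence $q > 3$. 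The rigidity at $q=7$ is most cleanly proved via conformal geometry: $g = u^4 |dx|^2$ becomes a complete metric of constant $Q$-curvature on $\R^3$ whose asymptotics force it to be (the pullback of) a round metric on $S^3$, yielding the stated explicit form by an Obata-type uniqueness argument.

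For items (iii)--(vi) I would run a shooting argument with $a = u(0) > 0$ fixed and $b = u''(0)$ varying: classify the forward flow of $u_b$ into three regimes -- vanishes in finite time, converges to the target profile, grows faster than the target -- and use continuity together with openness of the two extreme regimes to isolate a boundary parameter $b^\ast$ producing the desired solution. The target profiles are $r$ for (iii), $r^2$ for (iv), the scale-invariant $r^{4/(q+1)}$ for (v), and $r(\log r)^{1/4}$ for (vi).

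The main obstacle is (v) together with its borderline case (vi): here the scale-invariant profile is itself a singular solution of the equation, and linearization around it produces a fourth-order Euler operator whose indicial roots depend sensitively on $q$. Controlling nonlinear perturbations and proving genuine convergence (rather than oscillation toward the profile) requires both a Lyapunov-type functional adapted to the natural scaling $u(r) \mapsto \lambda^{-4/(q+1)} u(\lambda r)$ and a careful analysis of the characteristic polynomial; the case $q = 3$ is the most delicate because two indicial roots collide and produce the logarithmic factor $(\log r)^{1/4}$ appearing in (vi).
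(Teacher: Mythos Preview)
First, note that the paper does not prove all six items: (i), (ii), (v), (vi) are simply cited from the literature, and only (iii)--(iv) receive a new proof in \S\ref{new-proof}. So the relevant comparison is for those two.

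For (iii) and (iv) your shooting approach is the classical one and is essentially what the cited references (McKenna--Reichel, Guerra, Lai) carry out. The paper takes a genuinely different route: it works with the integral equation
\[
u_\ve(x)=\frac{1}{8\pi}\int_{\R^3}\frac{|x-y|-|y|}{u_\ve^q(y)}\,dy+P_\ve(x),
\]
with $P_\ve(x)=1+\ve|x|^2$ for (iii) and $P_\ve(x)=1+|x|^2+\ve|x|^4$ for (iv), produces a radial solution for each $\ve>0$ by the Schauder fixed-point argument of Proposition~\ref{prop1}, derives uniform-in-$\ve$ bounds (using, for (iii), the lower bound $u_\ve(r)\ge\delta(1+r^4)^{1/(q+1)}$ from \cite{Choi-Xu}), and sends $\ve\to 0$. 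The payoff is that nothing in this scheme is tied to radial symmetry: replacing $P_\ve$ by a non-radial polynomial and $X_{rad}$ by $X_{ev}$ is exactly how the non-radial solutions of Theorems~\ref{thm-1}--\ref{thm-2} are built. Your shooting argument, being intrinsically one-dimensional, cannot be extended that way.

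One genuine gap in your sketch, independent of the above: item (i) asserts non-existence of \emph{any} entire solution, radial or not, so reducing to the radial ODE is insufficient --- you would need a PDE-level argument (as in \cite{Choi-Xu,MR}). The same caveat applies to both halves of (ii). Also, the step ``integrating once gives $\Delta u\ge 0$'' has the sign backwards: from $\Delta^2 u=-u^{-q}<0$ one obtains that $\Delta u$ is superharmonic, and in the radial case $(\Delta u)'<0$, so $\Delta u$ is decreasing from its value $3u''(0)$ at the origin rather than globally nonnegative.
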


It has been shown by Choi-Xu \cite{Choi-Xu} that if $u$ is a solution to \eqref{eq-4th} with $q>4$, and $u$ has   exact linear growth at infinity then $u$ satisfies the integral equation \begin{align}\label{eq-int} u(x)=\frac{1}{8\pi}\int_{\R^3}\frac{|x-y|}{u^q(y)}dy+\gamma, \end{align} for some $\gamma\in\R$, and $\gamma=0$ if and only if $q=7$. In fact, every positive solution $u$ to $$(-\D)^nu+u^{-(4n-1)}=0\quad\text{ in } \R^{2n-1},\quad n\geq 2,$$ with exact linear growth at infinity  satisfies $$u(x)=c_n\int_{\R^{2n-1}}\frac{|x-y|}{u^{4n-1}(y)}dy,$$ where $c_n$ is a dimensional constant, see  \cite{F-X},  \cite{Ngo}.  For the classification of solutions to the above integral equation we refer the reader to  \cite{Li}, \cite{Xu}. 

In \cite{MR} McKenna-Reichel proved the  existence of non-radial solution to \begin{align}\label{eq-n} \D^2w+w^{-q}=0\quad\text{in }\R^n,\quad w>0\end{align} for $n\geq 4$. This was a simple consequence of their existence results to \eqref{eq-n} in lower dimension. More precisely, if $u$ is a  radial solution to \eqref{eq-n}  with $n\geq 3$ then $w(x):=u(x')$ is  a non-radial solution to $\D^2 w+w^{-q}=0$ in $\R^{n+1}$,  where $x=(x',x'')\in \R^n\times \R$.  Then they asked whether in $\R^3$ non-radial positive entire solution exist. (See [Open Questions (1), \cite{MR}].) 

We answer this question affirmatively. (See Theorem \ref{thm-1} below.)   In fact we prove the following  theorems.

\begin{thm}\label{thm-classi} Let $u$ be a solution to \eqref{eq-4th} for some $q>1$. Assume that \begin{align}\label{beta}\beta:=\frac{1}{8\pi}\int_{\R^3}u^{-q}dx<+\infty.\end{align} Then, up to a rotation and  translation,  we have   \begin{align}\label{u-repre} u(x)=(\beta+o(1))|x|+\sum_{i\in\mathcal{I}_1}a_ix_i^2+\sum_{i\in\mathcal{I}_2}b_ix_i+c,\quad o(1)\xrightarrow{|x|\to\infty}0,\end{align} 
where $$\I_1,\I_2\subseteq \{1,2,3\},\quad \I_1\cap \I_2=\emptyset,\quad a_i>0\text{ for }i\in\I_1,\quad |b_i|<\beta\text{ for }i\in\I_2, \quad c>0.$$

\end{thm}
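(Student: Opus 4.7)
\emph{Proof sketch.} The plan is to peel off from $u$ an explicit integral that carries the linear tail $\beta|x|$, and then to classify the resulting biharmonic remainder.

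\textbf{Step 1 (Decomposition).} In $\R^3$ one has $\D|x|=2/|x|$ and $\D(1/|x|)=-4\pi\delta_0$, whence $\D^2|x-y|=-8\pi\delta_y$. Set
$$\tilde w(x):=\frac{1}{8\pi}\int_{\R^3}(|x-y|-|y|)u(y)^{-q}\,dy.$$
Since $||x-y|-|y||\le|x|$ and $u^{-q}\in L^1(\R^3)$, the integral converges absolutely with $|\tilde w(x)|\le\beta|x|$, and $\D^2\tilde w=-u^{-q}$ distributionally. Hence $P:=u-\tilde w$ is biharmonic in $\R^3$ (and smooth by elliptic regularity). Applying dominated convergence to $\tilde w(x)/|x|$, with integrand bounded by $u^{-q}$ and tending pointwise to $u^{-q}$, gives $\tilde w(x)=(\beta+o(1))|x|$ as $|x|\to\infty$.

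\textbf{Step 2 (One-sided Liouville for $P$).} From $u>0$ and $|\tilde w|\le\beta|x|$ we obtain $P(x)\ge-\beta(1+|x|)$. I claim that any biharmonic $P$ on $\R^3$ with $P\ge-C(1+|x|)$ must be a polynomial of degree at most $2$. For any $x_0$, Pizzetti's identity for biharmonic functions,
$$\frac{1}{4\pi R^2}\int_{\partial B_R(x_0)}P\,d\sigma=P(x_0)+\frac{R^2}{6}\D P(x_0),$$
combined with $P\ge-C(1+|x_0|+R)$ on $\partial B_R(x_0)$, dividing by $R^2$ and sending $R\to\infty$, forces $\D P(x_0)\ge 0$. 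Since $\D P$ is harmonic in $\R^3$ and bounded below, Liouville gives $\D P\equiv C_0\ge 0$, so $P=(C_0/6)|x|^2+h_0$ with $h_0$ harmonic satisfying $h_0\ge-C'(1+|x|^2)$. A Cauchy-estimate argument (bound $\|h_0\|_{L^1(B_R)}=O(R^5)$ by using $\int_{\partial B_R}h_0=4\pi R^2 h_0(0)$ together with the quadratic lower bound, then invoke $|\partial^\alpha h_0(0)|\le C_\alpha R^{-3-|\alpha|}\|h_0\|_{L^1(B_R)}$) annihilates all derivatives of $h_0$ of order $\ge 3$, so $h_0$, and therefore $P$, is a polynomial of degree $\le 2$.

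\textbf{Step 3 (Normalization and sign constraints).} An orthogonal rotation diagonalizes the quadratic part of $P$, yielding $P(x)=c_0+\sum_ib_i^0 x_i+\sum_i\lambda_ix_i^2$. A negative $\lambda_i$ would force $u(te_i)\to-\infty$ along the $x_i$-axis, contradicting $u>0$; hence $\lambda_i\ge 0$ and we set $\I_1:=\{i:\lambda_i>0\}$. Translating $x_i\mapsto x_i-b_i^0/(2\lambda_i)$ for $i\in\I_1$ absorbs those linear terms; the decomposition $u=\tilde w+P$ survives the affine change of variables (the new $\tilde w$ differs from the old by a constant, still satisfies $\tilde w(x)=(\beta+o(1))|x|$ with $\tilde w(0)=0$), leaving
$$P(x)=c+\sum_{i\in\I_1}a_ix_i^2+\sum_{i\notin\I_1}b_ix_i.$$
Set $\I_2:=\{i\notin\I_1:b_i\ne 0\}$, so $\I_1\cap\I_2=\emptyset$. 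Along the $x_i$-axis for $i\in\I_2$, positivity of $u$ yields $|b_i|\le\beta$; in the equality case, a refined analysis in a paraboloidal region around the ray $\{x_ib_i<0\}$ exploits the cancellation $\beta|x|+b_ix_i=O(1)$ and the $L^1$ hypothesis on $u^{-q}$ to force a contradiction, giving the strict inequality $|b_i|<\beta$. Finally $u(0)=\tilde w(0)+P(0)=c$, so $c>0$.

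\textbf{Main obstacle.} The decisive step is Step 2: extracting pointwise sign information on $\D P$ from only a linear one-sided bound on $P$, and then upgrading a quadratic one-sided bound on the harmonic remainder to actual polynomial growth via Cauchy estimates. Verifying the strict inequality $|b_i|<\beta$ in Step 3 -- essential for $\int u^{-q}<\infty$ to be compatible with the asymptotic shape -- is the other delicate technical point.
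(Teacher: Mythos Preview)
Your argument follows the paper's almost exactly: the same integral $\tilde w$ (called $v$ there), the same one-sided Liouville step for the biharmonic remainder (the paper simply cites \cite[Theorem~5]{Mar0}, whereas you supply a self-contained Pizzetti/Cauchy-estimate proof), and the same diagonalization and translation. For the strict inequality $|b_i|<\beta$ the paper works on the \emph{cylinder} $\mathcal{C}=\{|\bar x|\le 1\}$ rather than a paraboloidal region: since $i\in\I_2$ forces $i\notin\I_1$, all quadratic terms of $P$ live in the $\bar x$-variables and are therefore bounded on $\mathcal{C}$, so $u\le C+|b_ix_i|+b_ix_i$ there and hence $u\le C$ on a half-cylinder of infinite volume, contradicting $u^{-q}\in L^1$ immediately for every $q>1$; your paraboloidal neighbourhood would have to be narrowed to a cylinder for this to go through when $\I_1\neq\emptyset$.
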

\begin{thm}\label{thm-1}
Let $q>1$. Then for every  $0<\kappa_1< \kappa_2$ there exists a non-radial solution $u$  to \eqref{eq-4th} such that
\begin{align}\label{kappa}
\liminf_{|x|\to\infty}\frac{u(x)}{|x|^2}=\kappa_1\quad\text{and }\limsup_{|x|\to\infty}\frac{u(x)}{|x|^2}=\kappa_2.
\end{align} 
\end{thm}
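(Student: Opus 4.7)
The plan is to construct a solution whose leading behavior is prescribed by an anisotropic, biharmonic quadratic form. Setting
\[
Q(x) := \kappa_1 x_1^2 + \kappa_2(x_2^2+x_3^2),
\]
one has $\Delta^2 Q \equiv 0$, and $Q(x)/|x|^2$ equals $\kappa_1$ along the $x_1$-axis and $\kappa_2$ on the plane $\{x_1=0\}$. I would look for $u$ of the form $u = Q + c + v$, where $c > 0$ is fixed to ensure positivity and $v(x) = o(|x|^2)$ is a correction. The equation then reduces to $\Delta^2 v = -(Q+c+v)^{-q}$, and the conclusions of the theorem follow once such a $v$ is produced: the anisotropy of $Q$ is genuinely quadratic and thus cannot be absorbed by a sub-quadratic $v$, so $u$ is non-radial and \eqref{kappa} holds.

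To produce $v$, I would solve the approximating Navier problem on balls,
\[
\Delta^2 u_R + u_R^{-q} = 0 \text{ in }B_R,\qquad u_R = Q+c,\ \Delta u_R = \Delta Q \text{ on }\partial B_R,
\]
and extract a locally uniform limit as $R \to \infty$. Existence for fixed $R$ can be argued by a monotone iteration started from the natural supersolution $\overline u^{(0)} = Q+c$ (which satisfies $\Delta^2(Q+c) + (Q+c)^{-q} > 0$), using the comparison principle for $\Delta^2$ on a ball under Navier boundary conditions; the classical splitting $w = -\Delta u$ reduces it to two successive applications of the Laplace maximum principle and gives the upper bound $u_R \le Q+c$ at once. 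The passage $R\to\infty$ then requires, in addition to standard interior Schauder estimates for $\Delta^2$, a uniform pointwise lower bound of the form $u_R(x) \ge c_0(1+|x|^2)$, which is the technical heart of the proof.

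The lower bound rests on the Green's function representation $v_R(x) = -\int_{B_R} G_R(x,y)\,u_R(y)^{-q}\,dy$, where $G_R>0$ is the Navier Green's function of $\Delta^2$. In the range $q>3/2$ the density $(Q+c)^{-q}$ is globally integrable and one verifies $|v_R| = O(|x|)$ uniformly in $R$, which is amply sub-quadratic. In the more delicate range $1<q\le 3/2$, where the density is no longer integrable and Theorem \ref{thm-classi} offers no guidance, one must compare $G_R(x,y)$ with a regularized kernel obtained from $|x-y|$ by subtracting a Taylor polynomial in $x$ that is itself biharmonic in $x$; enough subtractions restore absolute convergence given the decay $u_R^{-q}\sim |y|^{-2q}$, and yield $|v_R(x)| = O(|x|^s)$ for some $s<2$. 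The principal obstacle is carrying out this regularization with $R$-uniform estimates; once that is in place, the limit $u$ is a classical solution by elliptic regularity, non-radiality is immediate from the anisotropy of $Q$, and \eqref{kappa} follows directly from $v = o(|x|^2)$ combined with the directional behavior of $Q$.
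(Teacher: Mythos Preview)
Your ansatz $u=Q+c+v$ and the reduction to $v=o(|x|^2)$ match the paper, but the route to $v$ is different. The paper does not truncate the domain; it regularizes the equation by a quartic, setting $P_\ve(x)=1+Q(x)+\ve|x|^4$ and using a Schauder fixed point (Proposition~\ref{prop1}) on the integral equation
\[
v_\ve(x)=\frac{1}{8\pi}\int_{\R^3}\frac{|x-y|-|y|}{(P_\ve(y)+v_\ve(y))^q}\,dy
\]
to produce an even, convex $v_\ve$ on all of $\R^3$ with $v_\ve(0)=|\nabla v_\ve(0)|=0$, hence $v_\ve\ge0$. The quartic forces $\int|x|P_\ve^{-q}\,dx<\infty$ for every $q>1$, so the proposition applies across the full range. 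Since $P_\ve+v_\ve\ge1+\kappa_1|y|^2$, two derivatives of the explicit kernel give $|D^2v_\ve(x)|\lesssim\int|x-y|^{-1}(1+\kappa_1|y|^2)^{-q}\,dy$, which an elementary three-region split bounds by $|x|^{2-2q}$ in the worst case $1<q<3/2$; integrating twice from the origin yields $v_\ve\lesssim(1+|x|)^{4-2q}$, sub-quadratic precisely when $q>1$, uniformly in $\ve$. Then $\ve\downarrow0$.

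Your ball-exhaustion scheme has a genuine gap at the step you yourself call the technical heart. The only a priori inequality you obtain is $u_R\le Q+c$, which gives $u_R^{-q}\ge(Q+c)^{-q}$ --- the wrong direction for bounding $|v_R|=\int_{B_R}G_R\,u_R^{-q}$ from above by inserting the density $(Q+c)^{-q}$. Controlling $|v_R|$ requires a \emph{lower} bound on $u_R$, which is exactly what you are trying to establish; the argument is circular as written, in every range of $q$, not only $1<q\le3/2$. The paper escapes this because its fixed point comes with $v_\ve\ge0$ for free, so $P_\ve+v_\ve\ge P_\ve$ and the source is bounded above immediately; regularizing by $\ve|x|^4$ rather than by cutting to $B_R$ also keeps a single explicit kernel on $\R^3$, so the ``Taylor subtraction'' is just $|x-y|\mapsto|x-y|-|y|$ together with the Hessian estimate, with no $R$-dependent Green's function to track. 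A secondary issue: your monotone iteration from the supersolution $Q+c$ lacks a subsolution to keep the decreasing sequence positive, and since any subsolution must satisfy $\Delta^2\underline u\le-\underline u^{-q}<0$, the obvious candidates fail for large $R$; this is entangled with the same circularity.
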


\begin{thm}\label{thm-2}
Let $q>7$. Then for every $\kappa>0$ there exists a non-radial solution $u$  to \eqref{eq-4th} such that $$\liminf_{|x|\to\infty}\frac{u(x)}{|x|}\in (0,\infty)\quad\text{and }\limsup_{|x|\to\infty}\frac{u(x)}{|x|^2}=\kappa.$$
\end{thm}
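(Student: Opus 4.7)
The plan is to construct $u$ as a positive solution of the integral equation
\[
u(x) = \frac{1}{8\pi}\int_{\R^3} \frac{|x-y|}{u^q(y)}\, dy + \kappa x_3^2 + \gamma, \qquad x \in \R^3,
\]
for an appropriately chosen constant $\gamma > 0$. Since $\kappa x_3^2 + \gamma$ is biharmonic and $-|x|/(8\pi)$ is the fundamental solution of $\D^2$ in $\R^3$, any positive solution of this integral equation automatically satisfies \eqref{eq-4th}. The quadratic polynomial installs the prescribed $\kappa$-asymptote along $x_3$, while the $|x-y|$ kernel, together with the finiteness of $\int u^{-q}$, produces the desired linear behaviour in the remaining directions.

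To produce the fixed point, I would use a radial linear-growth solution $u_L$, which exists for $q > 3$ by Theorem \ref{thmA}(iii), as the building block for barriers. By the Choi--Xu representation (valid since $q > 4$), $u_L(x) = \frac{1}{8\pi}\int |x-y|\, u_L^{-q}(y)\,dy + \gamma_L$ for some $\gamma_L \in \R$. Setting $\bar u := u_L + \kappa x_3^2 + C$ with $C \geq \gamma - \gamma_L$ yields a super-solution of the operator $T(u)(x) := \frac{1}{8\pi}\int |x-y|\, u^{-q}(y)\, dy + \kappa x_3^2 + \gamma$, since $\bar u \geq u_L$ forces $\bar u^{-q} \leq u_L^{-q}$. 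A positive sub-solution $\underline u$ with matching anisotropic decay is constructed analogously from a small multiple of $u_L$. A fixed-point argument (Schauder, or a two-sided monotone iteration exploiting the pointwise monotone decrease of $T$) then produces a solution of the integral equation in the sandwich $\underline u \leq u \leq \bar u$, and by choosing the barriers invariant under $x_1 \leftrightarrow x_2$ and under each reflection $x_i \mapsto -x_i$, one can take $u$ with the same symmetries.

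With $u$ in hand, the asymptotic conclusions follow from Theorem \ref{thm-classi}. The finiteness of $\beta := \frac{1}{8\pi}\int u^{-q}$ is built into the construction, so $u$ admits the representation \eqref{u-repre}. The imposed symmetries force $\I_2 = \emptyset$, and the quadratic part shared by $\bar u$ and $\underline u$ forces $\I_1 = \{3\}$ with $a_3 = \kappa$; therefore $u(x) = (\beta + o(1))|x| + \kappa x_3^2 + c$. Along the $x_3$-axis one has $u/|x|^2 \to \kappa$, giving $\limsup u(x)/|x|^2 = \kappa$; along the $x_1$-axis the quadratic term vanishes and $u/|x| \to \beta$, giving $\liminf u(x)/|x| = \beta \in (0, \infty)$.

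The main obstacle is the anisotropic a priori control underpinning the iteration: one must show that when $u \gtrsim |y_1| + |y_2| + y_3^2 + 1$, the integral $\int |x-y|\, u^{-q}(y)\,dy$ is finite and behaves like $\beta|x|$ at infinity, with error terms small enough to close the scheme. A cylindrical-coordinate analysis shows that sufficiently many moments of $u^{-q}$ are integrable precisely when $q > 7$; this is the same critical exponent that appears in the Choi--Xu analysis of linear-growth solutions, and its reappearance here is not accidental. A secondary technical point, handled by a continuity argument in the finite-dimensional parameter $\gamma$, is to arrange that the coefficient of $|x|$ produced by the fixed-point integral is consistent with the one implicitly fixed by $u_L$.
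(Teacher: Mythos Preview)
Your outline has a genuine gap in the fixed-point step, and your explanation of where $q>7$ enters is not correct.

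\textbf{The barrier scheme does not close.} The operator $T(u)(x)=\frac{1}{8\pi}\int_{\R^3}|x-y|\,u^{-q}(y)\,dy+\kappa x_3^2+\gamma$ is monotone \emph{decreasing}, so for Schauder on an order interval $[\underline u,\bar u]$ you need $T(\underline u)\le\bar u$ and $T(\bar u)\ge\underline u$, not the usual sub/super-solution inequalities. With $\bar u=u_L+\kappa x_3^2+C$ and $\underline u=\lambda u_L$ these fail: along the $x_1$-axis $T(\bar u)\sim\beta_{\bar u}|x|$ with $\beta_{\bar u}<\beta_L$ (since $\bar u>u_L$), so $T(\bar u)\ge u_L$ is violated for large $|x|$; and if $\lambda<1$ then $T(\lambda u_L)=\lambda^{-q}(u_L-\gamma_L)+\kappa x_3^2+\gamma$ grows faster than $\bar u$ at infinity. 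Replacing $\underline u$ by $\mu(1+|x|)$ runs into the same incompatibility: the conditions $\mu\lesssim\beta_{\bar u}$ and $\mu^{-q}\lesssim$ (linear rate of $\bar u$) pull in opposite directions. The two-sided iteration you mention produces a $2$-cycle $T(u_+)=u_-$, $T(u_-)=u_+$, and you give no argument for $u_+=u_-$.

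\textbf{The role of $q>7$.} A cylindrical computation with $u\gtrsim 1+|x|+x_3^2$ shows $\int_{\R^3}|y|\,u^{-q}\,dy<\infty$ already for $q>7/2$, so integrability of the first moment is not what singles out $q>7$. In the paper the threshold comes from a Pohozaev identity (Lemma~2.3): one first solves the well-posed problem
\[
u_\ve(x)=\frac{1}{8\pi}\int_{\R^3}\frac{|x-y|}{u_\ve^q(y)}\,dy+1+\ve x_1^2+\kappa(x_2^2+x_3^2),
\]
for which Proposition~2.2 applies directly since $P_\ve>0$ and $\int|x|P_\ve^{-q}<\infty$. The identity \eqref{poho} with $c_q=\tfrac12-\tfrac{3}{q-1}>0$ (this is exactly $q>7$) rewrites as $\int(3P_\ve+2c_qu_\ve-4)u_\ve^{-q}=0$, forcing $u_\ve(0)\le 2/c_q$ uniformly in $\ve$. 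This single bound controls $\int|y|u_\ve^{-q}$ and hence $\nabla(u_\ve-P_\ve)$, and one passes to the limit $\ve\to0$.

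In short, the paper replaces your barrier construction by an $\ve$-regularisation plus a Pohozaev-based uniform estimate; the latter is the missing idea in your scheme and is precisely where $q>7$ is used.
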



The non-radial solutions constructed in Theorem \ref{thm-1} also satisfies the following integral condition 
\begin{equation}
\int_{\R^3} u^{-q} dx <+\infty,
\end{equation}
for $q>\frac32$. Note that McKenna-Reichel's non-radial example has infinite $L^1$ bound: $ \int_{\R^{n+1}} w^{-q}dx=+\infty$.

The existence of infinitely many entire non-radial solutions with different growth rates for
the  conformally invariant equation $ \Delta^2 u+ u^{-7}=0 $ in $\R^3$
is in striking contrast to
other conformally invariant equations $ -\Delta u= u^{\frac{n+2}{n-2}} $ in $\R^n$,  $n\geq 3$ and  $ (-\Delta )^m u = u^{ \frac{ n+2m}{n-2m}} $ in $ \R^n, \ n>2m$. In both cases all solutions are radially symmetric with respect to some point in $\R^n$, see \cite{CGS}, \cite{CL}, \cite{Lin} and \cite{W-X}.

Our motivation in the proof of Theorems \ref{thm-1}-\ref{thm-2} come from a similar phenomena exhibited in the following equation
\begin{equation}
\label{liouville}
 (-\Delta)^{\frac{n}{2}} u= e^{n u} \ \ \mbox{in} \ \R^n, \ \ \int_{\R^n} e^{nu} dx<+\infty.
 \end{equation}
It has been proved that for $ n\geq 4$ problem (\ref{liouville}) admits non-radial entire solutions with polynomial growth at infinity, see \cite{CC}, \cite{H}, \cite{H-M},   \cite{Mar0}, \cite{M},  \cite{WY} and the references therein. It is surprising to see that conformally invariant equations with negative powers share similar phenomena. 

In the remaining part of the paper we prove Theorems \ref{thm-classi}-\ref{thm-2} respectively. We also give a new proof of $iii)$-$iv)$ of Theorem \ref{thmA},  see sub-section \ref{new-proof}.

\section{Proof of the theorems}

We begin by proving Theorem \ref{thm-classi}.

\medskip

\noindent\emph{\textbf{Proof of Theorem \ref{thm-classi}}}
Let $u$ be a solution to \eqref{eq-4th}-\eqref{beta}. We set  \begin{align} \label{def-v}v(x):=\frac{1}{8\pi}\int_{\R^3}\frac{|x-y|-|y|}{u^q(y)}dy,\quad w:=u-v. \end{align}
Fixing $\ve>0$ and $R=R(\ve)>0$ so that $$\int_{B_R^c}\frac{dx}{u^q(x)}<8\pi\ve,$$ one gets $$v(x)\geq  \frac{1}{8\pi}\int_{B_R}\frac{|x|-2|y|}{u^q(y)}dy- \frac{1}{8\pi}\int_{B_R^c}\frac{|x|}{u^q(y)}dy\geq (\beta-2\ve)|x|-C(R).$$ Using that $||x-y|-|y||\leq |x|$, form \eqref{def-v}, we obtain  $$|v(x)|\leq \beta|x|\quad\text{in }\R^3.$$ Combining these estimates we deduce that  $$\lim_{|x|\to\infty}\frac{v(x)}{|x|}=\beta.$$ 
It follows that  $w$ satisfies   $$\D^2w=0\quad\text{ in }\R^3,\quad w(x)\geq -\beta|x|,$$ and hence,  $w$ is a polynomial of degree at most $2$, see for instance \cite[Theorem 5]{Mar0}. Indeed, up to a rotation and translation, we can write  $$w(x)=\sum_{i\in\mathcal{I}_1}a_ix_i^2+\sum_{i\in\mathcal{I}_2}b_ix_i+c_0,$$ where $\I_1,\I_2$ are two disjoint (possibly empty)  subsets of $\{1,2,3\}$, $a_i\neq 0$ for $i\in\I_1$,  $b_i\neq 0$ for $i\in\I_2$ and $c_0\in\R$.  Therefore, up to a rotation and translation,  we have $$ u(x)=\frac{1}{8\pi}\int_{\R^3}\frac{|x-y|-|y|}{u^q(y)}dy+\sum_{i\in\mathcal{I}_1}a_ix_i^2+\sum_{i\in\mathcal{I}_2}b_ix_i+c.$$   Now $u>0$ and  $|v(x)|\leq \beta|x|$ lead to  $a_i>0$ for $i\in\I_1$,  $|b_i|\leq\beta$ for $i\in\I_2$ and $c=u(0)>0$.

In order to prove  that  $|b_i|<\beta$ we assume by contradiction that $|b_{i_0}|=\beta$ for some $i_0\in\I_2$. Up to relabelling we may assume that  $i_0=1$. Then   $$u(x)\leq C+|b_1x_1|+b_1x_1\quad\text{on }\mathcal{C}:=\{x=(x_1,\bar x)\in\R\times\R^2:|\bar x|\leq 1\},$$  a  contradiction to \eqref{beta}.

We conclude the proof. 
\hfill $\square$

\medskip

Now we move on to the existence results. We look for solutions to \eqref{eq-4th} of the form $u=v+P$ where $P$ is a polynomial of degree $2$.
Notice that  $u=v+P$ satisfies \eqref{eq-4th} if and only if $v$ satisfies
\begin{align}\label{v+P}\D^2 v=-(v+P)^{-q},\quad v+P>0.
\end{align} In particular, if $P\geq 0$, and   $v$ satisfies the integral equation
\begin{align}\label{eq-v}
v(x)=\frac{1}{8\pi}\int_{\R^3}|x-y|\frac{1}{(P(y)+v(y))^q}dy,
\end{align}
then  $v$ satisfies \eqref{v+P}.  Thus, we only need to find solutions to \eqref{eq-v} (or a variant of it), and we shall do that by a fixed point argument.
Let us first define the spaces on which we shall work:
$$X:=\left\{v\in C^0(\R^3): \|v\|_X<\infty\right\},\quad \|v\|_X:=\sup_{x\in\R^3}\frac{|v(x)|}{1+|x|},$$
$$X_{ev}:=\left\{v\in X: v(x)=v(-x)\,\forall x\in\R^3\right\},\quad \|v\|_{X_{ev}}:=\|v\|_X,$$   $\rule{0cm}{.2cm}$
$$X_{rad}:=\left\{v\in X: v\text{ is radially symmetric}\right\},\quad \|v\|_{X_{rad}}:=\|v\|_X.$$


The following proposition is crucial in proving Theorem \ref{thm-1}.
\begin{prop}\label{prop1}
Let $P$ be a positive function on $\R^3$ such that $P(-x)=P(x)$ and for some $q>0$ $$\int_{\R^3}\frac{|x|}{(P(x))^q}dx<\infty.$$ Then there exists a  function $v\in X_{ev}$ satisfying $\min_{\R^3}v=v(0)=0$,
\begin{align}\label{v-soln2}
 v(x):=\frac{1}{8\pi}\int_{\R^3}\frac{|x-y|-|y|}{(P(y)+v(y))^q}dy,
 \end{align}
and
 $$\lim_{|x|\to\infty}\frac{ v(x)}{|x|}=\alpha_{P,v}:=\frac{1}{8\pi}\int_{\R^3}\frac{dy}{(P(y)+v(y))^q}.$$
 Moreover, if $P$ is radially symmetric then there exists a solution   to \eqref{v-soln2} in $X_{rad}$.
\end{prop}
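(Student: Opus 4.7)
The plan is to obtain $v$ as a fixed point of the operator
\[T(w)(x) := \frac{1}{8\pi}\int_{\R^3}\frac{|x-y|-|y|}{(P(y)+w(y))^q}\,dy,\]
so that all the required properties of $v$ (non-negativity, $v(0)=0$, linear growth, and the asymptotic value $\alpha_{P,v}$) are inherited from the corresponding properties of $T(v)$ through the identity $v=T(v)$. Set
\[C_0 := \frac{1}{8\pi}\int_{\R^3} P(y)^{-q}\,dy,\]
which is finite since on $\{|y|\geq 1\}$ we have $P^{-q}\leq |y| P^{-q}$ while on a bounded neighborhood of the origin $P$ is bounded below. Define
\[K := \left\{ v \in X_{ev} : v \geq 0,\ v(0)=0,\ v(x) \leq C_0|x|\text{ for all }x\in\R^3\right\},\]
which is nonempty (contains $v\equiv 0$), convex, and closed under pointwise (hence locally uniform) limits.

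I first verify that $T$ sends $K$ into $K$ and that $T(v)$ satisfies the extra qualitative properties. $T(v)(0)=0$ is immediate. Evenness of $T(v)$ follows from the change $y\mapsto -y$ together with evenness of $P$ and $v$. Symmetrizing the integrand gives
\[T(v)(x) = \frac{1}{16\pi}\int_{\R^3}\frac{|x-y|+|x+y|-2|y|}{(P(y)+v(y))^q}\,dy \geq 0,\]
by the triangle inequality $|x-y|+|x+y|\geq 2|y|$, so in particular $\min T(v)=T(v)(0)=0$. The bound $\bigl||x-y|-|y|\bigr|\leq|x|$ combined with $v\geq 0$ yields $T(v)(x)\leq C_0|x|$, placing $T(v)$ in $K$. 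For the asymptotics, $(|x-y|-|y|)/|x|\to 1$ as $|x|\to\infty$ for each fixed $y$, and dominated convergence (with dominating function $P^{-q}$) gives $T(v)(x)/|x|\to\alpha_{P,v}$.

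The fixed point step is carried out in the topology of locally uniform convergence on $K$. From $\bigl||x-y|-|x'-y|\bigr|\leq|x-x'|$, every function in $T(K)$ is Lipschitz with constant $C_0$, uniformly in $v\in K$; combined with the pointwise bound $|T(v)(x)|\leq C_0|x|$, the Arzelà–Ascoli theorem and a diagonal argument show that $T(K)$ is relatively compact for locally uniform convergence. The operator $T$ is continuous in this topology: if $v_n\to v$ locally uniformly in $K$, then $(P+v_n)^{-q}\to (P+v)^{-q}$ pointwise and dominated by $P^{-q}$, so $T(v_n)(x)\to T(v)(x)$ pointwise, and the uniform Lipschitz bound upgrades this to locally uniform convergence. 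Since $K$ is convex and closed, the closed convex hull $\tilde K$ of $T(K)$ is a compact convex subset of $K$ with $T(\tilde K)\subseteq \tilde K$, and Schauder's theorem (in locally convex Hausdorff spaces) furnishes a fixed point $v\in\tilde K\subseteq K$.

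The main technical point is the choice of topology: the Banach norm of $X$ is too strong for $T(K)$ to be relatively compact, because different $v$ yield different asymptotic slopes $\alpha_{P,v}$ at infinity, so two $T(v_1),T(v_2)\in T(K)$ with distinct slopes cannot be close in $\|\cdot\|_X$; locally uniform convergence, on the other hand, matches precisely the compactness provided by the uniform Lipschitz and pointwise bounds. The radial case is identical: $T$ preserves radial symmetry whenever $P$ is radial, and $K\cap X_{rad}$ has the same structure, so the same Schauder argument applied to $T\colon K\cap X_{rad}\to K\cap X_{rad}$ yields a radial fixed point.
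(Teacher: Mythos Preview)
Your proof is correct and takes a genuinely different route from the paper's. The paper defines $T$ on all of $X_{ev}$ (replacing $v$ by $|v|$ in the denominator so that the operator makes sense for arbitrary $v$), and proves $T$ is compact \emph{in the Banach norm} $\|\cdot\|_X$ via the decomposition $\bar v_k(x)=(c_{0,k}|x|-c_{1,k})+I_{2,k}(x)$, where the scalar coefficients $c_{i,k}$ converge along a subsequence (so the linear part converges in $X$) and $I_{2,k}$ is uniformly bounded by $\int |y|P(y)^{-q}\,dy$, hence $o(1+|x|)$ uniformly at infinity. Schauder then yields a fixed point in a ball of $X_{ev}$, and nonnegativity is obtained \emph{a posteriori}: the Hessian of $\bar v$ is strictly positive definite, and evenness forces $\nabla\bar v(0)=0$, so the minimum is $\bar v(0)=0$. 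Your argument instead builds nonnegativity into the domain $K$ from the start, via the neat symmetrization $|x-y|+|x+y|\geq 2|y|$, and replaces Banach-space compactness by Arzel\`a--Ascoli in $C^0_{\mathrm{loc}}$ together with Schauder--Tychonoff. This is a cleaner way to handle positivity and avoids the $I_{1,k}/I_{2,k}$ splitting; the paper's route, on the other hand, stays inside a Banach space and delivers convergence in the stronger $X$-topology. One correction: your claim that the $X$-norm is ``too strong'' because functions with distinct slopes cannot be close is mistaken --- $\|(\alpha_1-\alpha_2)|x|\|_X=|\alpha_1-\alpha_2|$, so nearby slopes give nearby functions, and indeed the paper shows $T$ \emph{is} compact on $X_{ev}$. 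This inaccurate aside does not affect the validity of your own argument.
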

\begin{proof}
Let us define an operator $T:X_{ev}\to X_{ev}$, $v\mapsto\bar v$, (In case $P$ is radial we restrict the operator  $T$ on $X_{rad}$. Notice that $T(X_{rad})\subset X_{rad}$.) where
\begin{align}\label{defT}
\bar v(x):=\frac{1}{8\pi}\int_{\R^3}\frac{|x-y|-|y|}{(P(y)+|v(y)|)^q}dy.
\end{align}
We proceed by steps.

\medskip

\noindent\textbf{Step 1} $T$ is compact.

Using  that $||x-y|-|y||\leq |x|$ we bound
\begin{align}\label{est-v}
\|\bar v\|_X\leq \frac{1}{8\pi}\int_{\R^3}\frac{1}{(P(y))^q}dy\leq C\quad \text{for every }v\in X.
\end{align}
Differentiating under the integral sign one gets
$$|\nabla \bar v(x)|\leq \frac{1}{8\pi}\int_{\R^3}\frac{1}{(P(y))^q}dy\leq C\quad\text{for every }v\in X,\,x\in\R^3.$$
We let $(v_k)$ be a sequence in $X_{ev}$. Then $\bar v_k:=T(v_k)$ is bounded in $C^1_{loc}(\R^3)$.  
Moreover, up to a subsequence, for some $c_i\geq0$ with  $i=0,1$,  we have $$\frac{1}{8\pi}\int_{\R^3}\frac{|y|^i}{(P(y)+|v_k(y)|)^q}dy\xrightarrow{k\to\infty}c_i.$$
We rewrite  \eqref{defT} (with $v=v_k$ and $\bar v=\bar v_k$) as
$$\bar v_k(x)=\frac{1}{8\pi}\int_{\R^3}\frac{|x|-|y|}{(P(y)+|v_k(y)|)^q}dy+\frac{1}{8\pi}\int_{\R^3}\frac{|x-y|-|x|}{(P(y)+|v_k(y)|)^q}dy=:I_{1,k}(x)+I_{2,k}(x).$$
It follows that $$I_{1,k}(x)\to c_0|x|-c_1\quad\text{in }X.$$
Using that $||x-y|-|x||\leq |y|$ 
we bound $$|I_{2,k}(x)|\leq \frac{1}{8\pi}\int_{\R^3}\frac{|y|}{(P(y))^q}dy\leq C.$$ This implies that $$\lim_{R\to\infty}\sup_{k}\sup_{x\in\R^3\setminus B_R}\frac{I_{2,k}(x)}{1+|x|}=0.$$ Since $$\sup_{k}\sup_{x\in\R^3}|\nabla I_{2,k}(x)|<\infty,$$ up to a subsequence,
  $$I_{2,k}\to I \quad \text{in }X_{ev},$$ for some $I\in X_{ev}$. This  proves Step 1 as $T$ is continuous.

  \medskip

  \noindent\textbf{Step 2} $T$ has a fixed point in $X_{ev}$.

  It follows form \eqref{est-v} that there exists $M>0$ such that $T(X_{ev})\subset\mathcal{ B}_M\subset X_{ev}$. In particular, $T(\bar{ \mathcal{B}}_M)\subset \mathcal{B}_M$. Hence, by Schauder  fixed point theorem there exists a fixed point of $T$ in $\mathcal{B}_M$.

\medskip

  \noindent\textbf{Step 3}
$\lim_{|x|\to\infty}\frac{ \bar v(x)}{|x|}=\frac{1}{8\pi}\int_{\R^3}\frac{dy}{(P(y)+|v(y)|)^q}=:\alpha(P,v).$

Step 3 follows from
\begin{align*}
|\bar v(x)-\alpha(P,v)|x||\leq \frac{1}{8\pi}\int_{\R^3}\frac{||x-y|-|y|-|x||}{(P(y)+|v(y)|)^q}dy\leq \frac{1}{4\pi}\int_{\R^3}\frac{|y|}{(P(y))^q}dy\leq C.
\end{align*}

  \medskip

  \noindent\textbf{Step 4} If $v$ is a fixed point of $T$ then $v\geq0$.

  Differentiating  under the integral sign, from \eqref{defT} one can show that the hessian $D^2 \bar v$ is strictly positive definite, and hence $\bar v$ is strictly convex. Moreover,  using that $(P+|v|)$ is  an even function, one obtains $\nabla \bar v(0)=0$.  This leads to  $$\min_{x\in\R^3}\bar v(x)=\bar v(0)=0.  $$

We conclude the proposition.
\end{proof}

In the same spirit one can prove the following proposition.
 \begin{prop}\label{prop-2}
Let $P$ be a positive even function on $\R^3$ such that   for some $q>0$ $$\int_{\R^3}\frac{|x|}{(P(x))^q}dx<\infty.$$ Then there exists a positive function $v\in X_{ev}$ satisfying
\begin{align}\label{v-soln1}
 v(x):=\frac{1}{8\pi}\int_{\R^3}\frac{|x-y|}{(P(y)+v(y))^q}dy,\quad \min_{\R^3}v=v(0).
 \end{align}  
\end{prop}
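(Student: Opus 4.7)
The plan is to mirror the proof of Proposition \ref{prop1}, replacing the kernel $|x-y|-|y|$ by $|x-y|$, and then extract the new conclusion that $v$ is strictly positive. Define $T: X_{ev}\to X_{ev}$, $v\mapsto \bar v$, by
\[
\bar v(x):=\frac{1}{8\pi}\int_{\R^3}\frac{|x-y|}{(P(y)+|v(y)|)^q}\,dy.
\]
Since $|x-y|\leq |x|+|y|$, the hypothesis $\int |x|/P^q<\infty$ together with the obvious bound $\int 1/P^q<\infty$ (which follows, say, from $\int|x|/P^q<\infty$ plus a local estimate near the origin using continuity of $P$) gives
\[
\frac{|\bar v(x)|}{1+|x|}\leq \frac{1}{8\pi}\int_{\R^3}\frac{dy}{P(y)^q}+\frac{1}{8\pi}\int_{\R^3}\frac{|y|\,dy}{P(y)^q}\leq C,
\]
so $T(X_{ev})\subset \mathcal{B}_M\subset X_{ev}$ for some $M>0$, and in particular $T(\bar{\mathcal{B}}_M)\subset \mathcal{B}_M$.

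For compactness I would argue exactly as in Step 1 of Proposition \ref{prop1}: given a sequence $(v_k)\subset X_{ev}$, one extracts $c_0,c_1\geq 0$ such that $\int |y|^i/(P+|v_k|)^q\to c_i$ for $i=0,1$, and writes
\[
\bar v_k(x)=\frac{1}{8\pi}\int_{\R^3}\frac{|x|}{(P+|v_k|)^q}\,dy+\frac{1}{8\pi}\int_{\R^3}\frac{|x-y|-|x|}{(P+|v_k|)^q}\,dy=:J_{1,k}(x)+J_{2,k}(x).
\]
The first term converges in $X$ to $c_0|x|$, while $|J_{2,k}|$ is bounded by $(8\pi)^{-1}\int |y|/P^q<\infty$ and has uniformly bounded gradient (differentiate under the integral); hence $J_{2,k}/(1+|x|)\to 0$ uniformly outside large balls and Arzelà--Ascoli gives a locally uniform limit on balls, so the whole sequence converges (up to subsequence) in $X_{ev}$. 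Continuity of $T$ is routine, so Schauder gives a fixed point $v\in\mathcal{B}_M$.

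It remains to verify the qualitative properties. Since the integrand defining $\bar v$ is nonnegative, any fixed point satisfies $v\geq 0$, and hence $P+|v|=P+v$, turning the fixed point identity into \eqref{v-soln1}. Differentiating twice under the integral, the Hessian of $x\mapsto |x-y|$ is $|x-y|^{-1}(I-(x-y)\otimes(x-y)/|x-y|^2)$, which is positive semidefinite and, after integrating in $y$ against a positive even weight, is strictly positive definite; thus $v$ is strictly convex. Evenness of $P+v$ makes $v$ even and forces $\nabla v(0)=0$ (the integrand for $\nabla v(0)$ is odd), so $\min_{\R^3} v=v(0)$. Finally, $v(0)=(8\pi)^{-1}\int |y|/(P+v)^q\,dy>0$, so $v$ is strictly positive, completing the proposition. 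The main technical point is the same as in Proposition \ref{prop1}, namely the compactness of $T$ on the unbounded domain $\R^3$; splitting off the linear-growth piece $c_0|x|$ is the device that makes the Arzelà--Ascoli argument go through in the $X$-norm.
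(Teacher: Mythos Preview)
Your proposal is correct and follows exactly the approach the paper intends: the paper gives no separate proof of Proposition~\ref{prop-2}, stating only that it is proved ``in the same spirit'' as Proposition~\ref{prop1}, and your argument is precisely the natural adaptation---same operator with kernel $|x-y|$ in place of $|x-y|-|y|$, same compactness/Schauder scheme, and the same convexity/evenness reasoning for the location of the minimum, with positivity of $v$ now immediate from the nonnegative kernel. The only cosmetic point is that your claim ``$J_{2,k}$ has uniformly bounded gradient'' should be read as a uniform Lipschitz bound (since $x\mapsto |x|$ is not differentiable at the origin), which is exactly how the paper uses it in Step~1 of Proposition~\ref{prop1}.
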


\medskip\noindent\emph{Proof of Theorem \ref{thm-1}} Let  $q>1$ and $0<\kappa_1<\kappa_2$ be fixed. For every $\ve>0$ let $v_\ve\in X_{ev} $ be a  solution  of \eqref{v-soln2}, that is,
\begin{align}\label{vepsilon}
v_\ve(x)=\frac{1}{8\pi}\int_{\R^3}\frac{|x-y|-|y|}{(P_\ve(y)+v_\ve(y))^q}dy,
\end{align}
where $$P_\ve(x):=1+\kappa_1x_1^2+\kappa_2 (x_2^2+x_3^2)+\ve|x|^4,\quad x=(x_1,x_2,x_3)\in\R^3.$$  We claim that for every  multi-index $\beta\in \mathbb{N}^3$  with $|\beta|=2$  \begin{align} \label{fq}
|D^\beta v_\ve(x)|\leq C\quad\text{on }B_2 \quad \text{and }|D^\beta v_\ve(x)| \leq Cf_q(x)\quad\text{on }B_2^c, \end{align}
where
\begin{align*}
f_q(x):=\left\{\begin{array}{ll}
|x|^{-1}\quad&\text{if }q>3/2 \\
|x|^{-1}\log|x|\quad &\text{if }q=3/2\\
|x|^{2-2q}\quad &\text{if }q<3/2.
\end{array} \right.
\end{align*}
For $|\beta|=2$,  differentiating under the integral sign, from \eqref{vepsilon}, we obtain  
\begin{align*}
|D^\beta v_\ve((x))|&\leq C\int_{\R^3}\frac{1}{|x-y|}\frac{dy}{(P_\ve(y)+v_\ve(y))^q}\\
&\leq C\int_{\R^3}\frac{1}{|x-y|}\frac{dy}{(1+\kappa_1|y|^2)^q}\\
&=C\sum_{i=1}^3I_i(x),
\end{align*}
where $$I_i(x):=\int_{A_i}\frac{1}{|x-y|}\frac{dy}{(1+\kappa_1|y|^2)^q},\quad A_1:=B_{\frac{|x|}{2}},\, A_2:=B_{2|x|}\setminus A_1,\, A_3:=\R^3\setminus B_{2|x|}.$$  Since $q>1$ we have  $|D^\beta v_\ve|\leq C$ on $B_2$. For $|x|\geq 2$ we  bound
$$I_1(x)\leq \frac{2}{|x|}\int_{A_1}\frac{dy}{(1+\kappa_1|y|^2)^q}\leq Cf_q(x),$$
$$I_2(x)\leq \frac{C}{|x|^{2q}}\int_{A_2}\frac{dy}{|x-y|}\leq \frac{C}{|x|^{2q}}\int_{|y|\leq 3|x|}\frac{dy}{|y|}\leq C|x|^{2-2q},$$
$$I_3(x)\leq 2\int_{A_3}\frac{dy}{|y|(1+\kappa_1|y|^2)^q}\leq C|x|^{2-2q}.$$
This proves \eqref{fq}. Since $v_\ve(0)=|\nabla v_\ve(0)|=0$, by \eqref{fq},  we have
\begin{align}\label{est-v-epsilon}
v_\ve(x)\leq C\left\{\begin{array}{ll}
(1+|x|)\log(2+ |x|) \quad&\text{if }q>3/2 \\
(1+|x|)( \log(2+|x|))^2\quad &\text{if }q=3/2\\
(1+|x|)^{4-2q}\quad &\text{if }q<3/2.
\end{array} \right.
\end{align}
Therefore, for some  $\ve_k\downarrow0$ we must have  $v_{\ve_k}\to v$ in $C^3_{loc}(\R^3)$ for some $v$ in $\R^3$, where $v$  satisfies $$\D^2 v=-\frac{1}{(v+P_0)^q}\quad \text{in }\R^3,\quad v\geq 0\quad\text{in }\R^3,\quad P_0(x):=1+\kappa_1x_1^2+\kappa_2(x_2^2+x_3^2).$$ Hence,   $u=v+P_0$ is a solution to \eqref{eq-4th}. Moreover, as $v$ satisfies  \eqref{est-v-epsilon}, we have
$$\liminf_{|x|\to\infty}\frac{u(x)}{|x|^2}=\liminf_{|x|\to\infty}\frac{P_0(x)}{|x|^2}=\kappa_1,\quad  \limsup_{|x|\to\infty}\frac{u(x)}{|x|^2}=\limsup_{|x|\to\infty}\frac{P_0(x)}{|x|^2}=\kappa_2.$$ 
This completes the proof. 

$\hfill\square $

\medskip
\noindent\textbf{\emph{Proof of Theorem \ref{thm-2}}}
  Let  $q>7$ be fixed.  Then for every $\ve>0$  there exists a    positive solution $v_\ve$ to \eqref{v-soln1}  with $$P(x)=P_\ve(x):=1+\ve x_1^2+\kappa(x_2^2+x_3^2).$$ 
Setting $u_\ve:=v_\ve+P_\ve$ one gets \begin{align}\label{uepsilon} u_\ve(x)=\frac{1}{8\pi}\int_{\R^3}\frac{|x-y|}{u_\ve^q(y)}dy+ P_\ve(x),\quad \min_{\R^3}u_\ve=u_\ve(0).
\end{align}   Since    $c_q:=\frac12-\frac{3}{q-1}>0$ for $q>7$,  from \eqref{poho}, one obtains
\begin{align*}
0&=c_q\int_{\R^3}\frac{1}{u_\ve^{q-1}(x)}dx+\frac 12\int_{\R^3}\frac{2x\cdot\nabla P_\ve(x)-P_\ve(x)}{u^{q}(x)}dx\\
&=\frac12\int_{\R^3}\frac{3P_\ve(x)+2c_qu_\ve(x)-4}{u_\ve^q(x)}dx,
\end{align*}
which implies that $2c_qu_\ve(0)<4$, that is, $u_\ve(0)\leq C$. 
Therefore, by \eqref{uepsilon}
\begin{align}\label{uniform}
\frac{1}{8\pi}\int_{\R^3}\frac{|y|}{u_\ve^{q}(y)}dy=u_\ve(0)-1\leq C.
\end{align}
 Hence, differentiating under the integral sign, from \eqref{uepsilon} $$|\nabla (u_\ve(x)-P_\ve(x))|\leq \frac{1}{8\pi}\int_{\R^3}\frac{dy}{u_\ve^q(y)}\leq C.$$
Thus, $(u_\ve)_{0<\ve\leq 1}$ is bounded in $C^1_{loc}(\R^3)$. This yields $$u_\ve(x)\geq \frac{1}{8\pi}\int_{B_1}\frac{|x-y|}{u_\ve^q(y)}dy\geq \delta |x| \quad\text{for }|x|\geq 2,$$ for some $\delta>0$. Using this, and recalling that $q>4$, we deduce  $$\lim_{R\to\infty}\sup_{0<\ve\leq1}\int_{|y|\geq R}\frac{|y|}{u_\ve^q(y)}dy=0 .$$   Therefore, for some $\ve_k\downarrow 0$, we have  $u_{\ve_k}\to u$, where $u$ satisfies
\begin{align*}
u(x)=\frac{1}{8\pi}\int_{\R^3}\frac{|x-y|}{u^q(y)}dy+ 1+\kappa(x_2^2+x_3^2).
\end{align*}

We conclude the proof.
$\hfill\square $

\medskip

\subsection{A new proof of $iii)$-$iv)$ of Theorem \ref{thmA}}\label{new-proof}

\noindent\textbf{\emph{Proof of $iii)$}} Let $q>3$ be fixed. Then  by Proposition \ref{prop1}, for every $\ve>0$, there exists a radial function $u_\ve$ satisfying    $$u_\ve(x)=\frac{1}{8\pi}\int_{\R^3}\frac{|x-y|-|y|}{u_\ve^q(y)}dy+1+\ve|x|^2,\quad \min_{\R^3}u_\ve=u_\ve(0)=1.$$
Since $u_\ve$ is radially symmetric, one has (see Eq. (3.3) in \cite{Choi-Xu}) $$u_\ve(r)\geq \delta  (1+ r^4)^\frac{1}{q+1},$$ for some $\delta>0$.  Therefore, as $q>3$
$$\int_{\R^3}\frac{dx}{u_\ve^q(x)}\leq C\int_{\R^3}\frac{dx}{(1+|x|^4)^\frac{q}{q+1}}\leq C,$$ which gives  $$|\nabla u_\ve(x)|\leq \frac{1}{8\pi}\int_{\R^3}\frac{1}{u_\ve^q(y)}dy+2\ve|x|\leq C+2\ve|x|.$$   As $u_\ve(0)=1$, one would get $$u_\ve(x)\leq 1+C|x|+C\ve |x|^2.$$ Thus,  the family $(u_\ve)_{0<\ve\leq 1}$ is bounded in $C^1_{loc}(\R^3)$. Hence, for some $\ve_k\downarrow 0$ we have  $u_{\ve_k}\to u$ where $u$ satisfies  $$u(x)=\frac{1}{8\pi}\int_{\R^3}\frac{|x-y|-|y|}{u^q(y)}dy+1,\quad \min_{\R^3}u=u(0)=1.$$  Finally, as before, we have   $$\lim_{|x|\to\infty}\frac{u(x)}{|x|}=\frac{1}{8\pi}\int_{\R^3}\frac{dy}{u^q(y)}.$$ 
This completes the proof of $iii)$.
\hfill $\square$

\medskip

\noindent\textbf{\emph{Proof of $iv)$}}
Let $q>1$ be fixed. Then  by Proposition \ref{prop1}, for every $\ve>0$, there exists a non-negative radial function $v_\ve$ satisfying  
 $$v_\ve(x)=\frac{1}{8\pi}\int_{\R^3}\frac{|x-y|-|y|}{(1+|y|^2+\ve|y|^4+v_\ve(y))^q}dy.$$
The rest of the proof is similar to that of Theorem \ref{thm-1}.
\hfill $\square$

\medskip


 In the spirit of \cite[Lemma 4.9]{Choi-Xu} we prove the following Pohozaev type identity.
\begin{lem}[Pohozaev identity]
Let $u$ be a positive solution to
\begin{align}\label{poho-eq}
u(x)=\frac{1}{8\pi}\int_{\R^3}\frac{|x-y|}{u^q(y)}dy+P(x),
\end{align}
for some non-negative polynomial $P$ of degree at most $2$ and  $q>4$. Then \begin{align}\label{poho}
\left(\frac12-\frac{3}{q-1}\right)\int_{\R^3}\frac{1}{u^{q-1}(x)}dx+\frac 12\int_{\R^3}\frac{2x\cdot\nabla P(x)-P(x)}{u^{q}(x)}dx=0
\end{align}
\end{lem}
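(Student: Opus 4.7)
The natural test function is $2x\cdot\nabla u - u$, but it is cleaner to work with $v:=u-P$, which by the integral equation satisfies
$$v(x)=\frac{1}{8\pi}\int_{\R^3}\frac{|x-y|}{u^q(y)}dy\geq 0.$$
My plan is first to establish the symmetric identity
$$\int_{\R^3}(2x\cdot\nabla v-v)\,u^{-q}\,dx=0 \qquad (\star)$$
purely from the integral representation, and then to translate it into the stated Pohozaev identity by substituting $v=u-P$.

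To prove $(\star)$, I differentiate under the integral sign to write
$$x\cdot\nabla v(x)=\frac{1}{8\pi}\int_{\R^3}\frac{x\cdot(x-y)}{|x-y|}\,u^{-q}(y)\,dy,$$
multiply by $u^{-q}(x)$, and apply Fubini to get a symmetric double integral of $\frac{x\cdot(x-y)}{|x-y|}u^{-q}(x)u^{-q}(y)$. Swapping $x\leftrightarrow y$ and averaging replaces the numerator by $\tfrac12(x-y)\cdot(x-y)=\tfrac12|x-y|^2$, after which dividing by $|x-y|$ recovers $\tfrac12\cdot 8\pi\int v(x)u^{-q}(x)dx$, giving $(\star)$.

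Next I expand $2x\cdot\nabla v-v=(2x\cdot\nabla u-u)-(2x\cdot\nabla P-P)$ in $(\star)$ to obtain
$$\int(2x\cdot\nabla u)\,u^{-q}dx-\int u^{1-q}dx=\int(2x\cdot\nabla P-P)\,u^{-q}dx.$$
Integration by parts using $u^{-q}\nabla u=-\frac{1}{q-1}\nabla(u^{1-q})$, with the spherical boundary term at radius $R\to\infty$ discarded, yields
$$\int(2x\cdot\nabla u)\,u^{-q}dx=\frac{6}{q-1}\int u^{1-q}dx,$$
and substituting this gives $\int(2x\cdot\nabla P-P)u^{-q}dx=\bigl(\tfrac{6}{q-1}-1\bigr)\int u^{1-q}dx=\tfrac{7-q}{q-1}\int u^{1-q}dx$, which rearranges to the claimed identity \eqref{poho}.

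The main obstacle, and the only place where the hypothesis $q>4$ is consumed, is justifying Fubini in the first step and the vanishing of the boundary term $R\int_{\partial B_R}u^{1-q}dS$ in the second. Both follow from the two a priori ingredients (i) the lower bound $u(x)\geq\delta|x|$ for large $|x|$, obtained by restricting the integral equation to $B_1$ (on which $u$ is bounded from above) to get $v(x)\geq \frac{1}{8\pi}\int_{B_1}\frac{|x-y|}{u^q(y)}dy\geq\delta|x|$, and (ii) the consequent integrability $\int u^{1-q}dx\leq C\int(1+|x|)^{1-q}dx<\infty$, which holds precisely because $q-1>3$. The same bound makes the surface term of order $R\cdot R^2\cdot R^{1-q}=R^{4-q}\to 0$, closing the argument.
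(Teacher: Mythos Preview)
Your proof is correct and follows essentially the same route as the paper's: differentiate the kernel, multiply by $u^{-q}$, symmetrize the resulting double integral (the paper does this via the decomposition $x=\tfrac12((x+y)+(x-y))$, which is equivalent to your swap-and-average), and integrate by parts using the linear lower bound $u(x)\geq\delta|x|$ together with $q>4$ to kill the boundary term. The only cosmetic difference is that you work directly on $\R^3\times\R^3$ with Fubini (justified by $|x|u^{-q}\in L^1$), whereas the paper truncates to $B_R$ and passes to the limit, which forces them to check separately that the antisymmetric remainder over $B_R\times B_R^c$ vanishes.
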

\begin{proof}
Differentiating under the integral sign, from \eqref{poho-eq}$$x\cdot \nabla u(x)=\frac{1}{8\pi}\int_{\R^3}\frac{x\cdot(x-y)}{|x-y|}\frac{1}{u^q(y)}dy+x\cdot \nabla P(x).$$ Multiplying the above identity by  $u^{-q}(x)$ and integrating on $B_R$
\begin{align}\label{17}
\int_{B_R}\frac{x\cdot \nabla u(x)}{u^q(x)}dx=\frac{1}{8\pi}\int_{B_R}\int_{\R^3}\frac{x\cdot(x-y)}{|x-y|}\frac{1}{u^q(x)u^q(y)}dydx+\int_{B_R}\frac{x\cdot \nabla P(x)}{u^q(x)}dx.\end{align}
 Integration by parts yields 
\begin{align*}
\int_{B_R}\frac{x\cdot \nabla u(x)}{u^q(x)}dx&=\frac{1}{1-q}\int_{B_R}x\cdot\nabla (u^{1-q}(x))dx\\ &=-\frac{3}{1-q}\int_{B_R}u^{1-q}dx+\frac{R}{1-q}\int_{\partial B_R}u^{1-q}d\sigma.\end{align*}
Since $q>4$ and $u(x)\geq \delta |x|$ for some $\delta>0$ and $|x|$ large $$\lim_{R\to\infty}{R} \int_{\partial B_R}u^{1-q}d\sigma=0.$$
Writing $x=\frac12((x+y)+(x-y))$, and  setting $$F(x,y):=\frac{(x+y)\cdot(x-y)}{|x-y|}\frac{1}{u^q(x)u^q(y)}$$ we get
\begin{align*}
&\frac{1}{8\pi}\int_{B_R}\int_{\R^3}\frac{x\cdot(x-y)}{|x-y|}\frac{1}{u^q(x)u^q(y)}dydx \\
&=\frac12\int_{B_R} \frac{1}{u^q(x)}\left(\frac{1}{8\pi}\int_{\R^3}\frac{{|x-y|}}{u^q(y)}dy\right)dx+\frac{1}{16\pi}\int_{B_R}\int_{\R^3}F(x,y)dydx\\
&=\frac12\int_{B_R}\frac{1}{u^q(x)}(u(x)-P(x))dx+\frac{1}{16\pi}\int_{B_R}\int_{\R^3}F(x,y)dydx.
\end{align*}
Notice that  $F(x,y)=-F(y,x)$. Hence,    $$\int_{B_R}\int_{B_R}F(x,y)dydx=0,$$ and   $$\lim_{R\to\infty} \int_{B_R}\int_{\R^3}F(x,y)dydx=\lim_{R\to\infty} \int_{B_R}\int_{B_R^c}F(x,y)dydx=0,$$ where the last equality follows from $|x|u^{-q}(x)\in L^1(\R^3)$.   Combining these estimates and taking $R\to\infty$ in \eqref{17} one gets \eqref{poho}.
\end{proof}

\end{document}